\DeclareMathOperator{\pnt}{\raise 0.5mm \hbox{\large\textbf{.}}}
\newcommand{\note}[2][ ]{}%dummy macro
\newtheorem{theorem}{Theorem}%[section]
\newtheorem{lemma}[theorem]{Lemma}
\theoremstyle{definition}
\newtheorem{example}[theorem]{Example}
\title[Odd values of  the Klein $j$-function and the cubic partition function]{On the number of odd values of  the Klein $j$-function\\and the cubic partition function}
\author{Fabrizio Zanello} \address{Department of Mathematical  Sciences\\ Michigan Tech\\ Houghton, MI  49931-1295}
\email{zanello@mtu.edu,{\ }zanello@math.mit.edu}
\thanks{2010 {\em Mathematics Subject Classification.} Primary: 11P83; Secondary:  05A17, 11F03, 11F33, 11N37.\\\indent 
{\em Key words and phrases.} Klein $j$-function; cubic partition function; binary $q$-series; density odd values; four-square theorem; sum-of-divisor function; modular forms modulo 2.}
\begin{document}
\maketitle
%\linenumbers

\begin{abstract} In this note, using entirely algebraic or elementary methods, we determine a new asymptotic lower bound for the number of odd values of one of the most important modular functions in number theory, the Klein $j$-function. Namely, we show that the number of integers $n\le x$ such that the Klein $j$-function --- or equivalently, the cubic partition function ---  is odd is at least of the order of
$$\frac{\sqrt{x}  \log \log x}{\log x},$$
for $x$ large. This improves recent results of Berndt-Yee-Zaharescu and Chen-Lin, and approaches significantly the best lower bound currently known for the ordinary partition function, obtained using the theory of modular forms. Unlike many works in this area, our techniques to show the above result, that have in part been inspired by some recent ideas of P. Monsky on quadratic representations, do not involve the use of modular forms.

Then, in the second part of the article, we  show how to employ modular forms in order to  slightly refine our bound. In fact, our brief argument, which combines a  recent result of J.-L. Nicolas and J.-P. Serre with a classical theorem of J.-P. Serre on the asymptotics of the Fourier coefficients of certain level 1 modular forms, will more generally apply to provide a lower bound for the number of odd values of any positive power of the generating function of the partition function.

\end{abstract}

\section{Introduction and past work}

One of the most interesting, and still poorly understood, problems in partition theory is the study of the parity of the partition function $p(n)$. It is widely believed (see e.g. \cite{PS}) that $p(n)$ is both even and odd  with density $1/2$; i.e., if $f_p^0(x)$ (respectively, $f_p^1(x)$) denotes the number of  integers $n\le x$ such that $p(n)$ is even (respectively, odd), then  $f_p^0(x)$ and $f_p^1(x)$ are conjectured to be both asymptotic to $x/2$, for $x$ large. However, despite important efforts by many researchers using a variety of combinatorial, algebraic or analytic tools, this conjecture still appears to be  out of reach for today's mathematics. The current best lower bounds  for $f_p^0(x)$ and $f_p^1(x)$ are due to J. Bella\"iche and J.-L. Nicolas \cite{BN}, who  refined results by S. Ahlgren, D. Eichhorn, K. Ono, and J.-P. Serre, among many others (see, as a sample, \cite{Ahl,Ei,Nic,Ono,Ser}). Namely, using the theory of modular forms, Bella\"iche and Nicolas proved:
$$f_p^0(x) \gg \sqrt{x} \log \log x, {\ }{\ }\text{and}{\ }{\ }f_p^1(x)\gg \frac{\sqrt{x}}{(\log x)^{7/8}}.$$

Two closely related functions, whose parity seems equally hard to understand, are the  \emph{Klein $j$-function}, $j(n)$, and the \emph{cubic partition function}, $c(n)$. The Klein $j$-function, a modular function playing a central role in several areas of number theory (see e.g. \cite{Ap}), is defined by:
\begin{equation}\label{tau}
\sum_{n\ge -1}j(n)q^n =\frac{\left(1+240\sum_{n\ge 1}\frac{n^3q^n}{1-q^n}\right)^3}{q\prod_{i\ge 1} (1-q^i)^{24}}.
\end{equation}

The cubic partition function denotes the number of partitions of $n$ into parts of two kinds, where those  of the second kind can only be even. Thus,  the generating function for $c(n)$ is:
$$\sum_{n\ge 0}c(n)q^n = \frac{1}{\prod_{i\ge 1} (1-q^i)(1-q^{2i})}.$$

W.Y.C. Chen and B.L.S. Lin \cite{CL} and, in different but equivalent terms, K. Ono and N. Ramsey \cite{OR} have conjectured that, like for $p(n)$, a ``$50\%$ density'' result also holds for $c(n)$. 

From an algebraic perspective, it is easy to see that, in the ring of series $\mathbb Z_2[[q]]$, the generating function for $c(n)$ is the multiplicative inverse of the series of the triangular numbers $\binom{n+1}{2}$, and that  it  coincides with the cube of the generating function for $p(n)$. The generating function for $j(n)$ is essentially the inverse of the series  of the odd squares (shifted by one degree), or of  the Ramanujan Tau function. Similarly to $p(n)$, determining optimal bounds on the parity of $j(n)$ and $c(n)$ is considered a nearly intractable problem today. We just remark here that the difficulty of this parity problem appears to be shared by many  series that,  modulo 2, are the  inverse of a quadratic series in $\mathbb Z_2[[q]]$ --- including indeed $p(n)$, because of the Pentagonal Number Theorem (see \cite{CEO,Mo2} for more). 

However, a large amount of work has recently been done on the parity of $j(n)$ and $c(n)$, by a variety of  methods; for some important contributions, see e.g.: C. Alfes \cite{Alf}, who proved, among other facts, a lower bound of the order of $\sqrt{x}/\log x$ for the even values of $j(n)$ when $n\equiv 7$ (mod 8), and thus for the even values of $c(n)$ (as we will see below, these two bounds are essentially equivalent); W.Y.C. Chen and B.L.S. Lin \cite{CL}, who proved the infinity of both the even and the odd values; and K. Ono and N. Ramsey \cite{OR}, who refined some of Alfes's results. The  best existing bounds, both for the even and the odd values, are due to B. Berndt, A.J. Yee and A. Zaharescu \cite{BYZ}. They proved that $f_j^0(x)$ (for $n\equiv 7$ (mod 8)) and $ f_c^0(x) \gg \sqrt{x}$, while $f_j^1(x)$ and $f_c^1(x)  \gg x^{1/2 -\alpha/\log \log x}$, for some constant $\alpha$.

In this brief note, our main goal is refine the above bound for the number of odd values of $j(n)$ and $c(n)$, using only algebraic or elementary methods. We will show that
$$f_j^1(x) {\ }{\ }\text{and}{\ }{\ } f_c^1(x)  \gg \frac{\sqrt{x} \log \log x}{\log x},$$
which is just a factor of $(\log x)^{1/8}/\log \log x$ away from the current best bound for $p(n)$ \cite{BN}. 

Unlike many works in this area, our approach will completely avoid the complex-analytic machinery of  modular forms. Interestingly, we will also relate this problem to the study of integer representations  by quadratic forms;  though it may not be immediately obvious from  our proof, this work was in part   inspired by a recent comment of P. Monsky on \emph{MathOverflow} in response to a question by J. Bella\"iche \cite{Mo}, as well as by paper \cite{Mo2}, again by Monsky.

In the final portion of the paper, we will then see how to  slightly improve the above bound with another brief argument that employs the theory of modular forms, by combining a classical result of J.-P. Serre \cite{Ser2} and a  recent one by J.-L. Nicolas and J.-P. Serre \cite{NS} on the asymptotics of the number of odd Fourier coefficients of those modular forms of level 1 that can be expressed as a pure power of the modulo 2 reduction of the Ramanujan Tau function. In fact, more generally, we will show that the same refined lower bound holds for any positive power $b$ of the generating function of the partition function (the cubic partitions and the Klein $j$-function correspond to the case $b=3$).

\section{The lower bound}

We begin with a key lemma on integer representations as sums of four squares.

\begin{lemma}\label{78}
Let $R(n)$ be the number of representations of $n$ as $n=X^2+Y^2+Z^2+4W^2$, for odd positive integers $X,Y,Z$ and $W$, and let $f^1_R(x)$ denote the number of  integers $n\le x$ such that $R(n)$ is odd. Then
$$f^1_R(x)\gg \frac{x \log \log x}{\log x}.$$
\end{lemma}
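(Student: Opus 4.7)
\emph{Proof plan.} The plan is to translate $R(n)$ into a function of $\sigma(n)$ via Jacobi's four-square theorem, deduce a simple $2$-adic condition for $R(n)$ to be odd, and then count an appropriate family of semiprimes.

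First, since $X^2+Y^2+Z^2+4W^2 \equiv 1+1+1+4 \equiv 7\pmod 8$ whenever $X,Y,Z,W$ are odd, we have $R(n)=0$ unless $n\equiv 7\pmod 8$. Conversely, for $n\equiv 7\pmod 8$ a residue analysis mod $8$ shows that in every integer solution $n=a_1^2+a_2^2+a_3^2+a_4^2$, exactly three of the $a_i$'s are odd and the remaining one has the form $\pm 2w$ with $w$ odd. Accounting for the $4$ choices of position for the even entry and the $2^4$ sign patterns gives the identity $r_4(n)=64\,R(n)$. Combined with Jacobi's formula $r_4(n)=8\sigma(n)$ (valid for odd $n$), this yields
\[
R(n)=\frac{\sigma(n)}{8}\qquad\text{for }n\equiv 7\pmod 8,
\]
so $R(n)$ is odd precisely when $v_2(\sigma(n))=3$.

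Next, I would exhibit a large family of $n\le x$ satisfying this condition. Take $n=pq$ with distinct primes $p\equiv 3\pmod 8$ and $q\equiv 5\pmod 8$. Then $n\equiv 15\equiv 7\pmod 8$, $\sigma(n)=(1+p)(1+q)$, $v_2(1+p)=2$, and $v_2(1+q)=1$, so $v_2(\sigma(n))=3$ and $R(n)$ is odd for every such $n$.

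Finally, I would bound the number of such semiprimes $\le x$ from below. By Mertens' theorem in arithmetic progressions, $\sum_{p\equiv 3\,(8),\,p\le \sqrt x}1/p \gg \log\log x$; by the prime number theorem in arithmetic progressions, the number of primes $q\le y$ with $q\equiv 5\pmod 8$ is $\gg y/\log y$. A standard Landau-type two-prime count
\[
f_R^1(x)\ge\sum_{\substack{p\equiv 3\,(8)\\ p\le\sqrt x}}\#\{q\le x/p:q\equiv 5\,(8)\text{ prime}\}\gg\frac{x\log\log x}{\log x}
\]
then finishes the proof. The main obstacle I foresee is the bookkeeping in establishing $r_4(n)=64\,R(n)$: one must correctly pair ordered positive odd tuples $(X,Y,Z,W)$ counted by $R(n)$ with signed integer $4$-tuples counted by $r_4(n)$, tracking both the position of the even coordinate and the four independent signs. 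Once that identity is in place, the $2$-adic analysis of $\sigma(pq)$ and the prime counting in arithmetic progressions are both classical.
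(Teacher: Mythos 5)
Your proposal is correct and follows essentially the same route as the paper: reduce to $n\equiv 7\pmod 8$, use the symmetry count $r_4(n)=64\,R(n)$ together with Jacobi's theorem to get $R(n)=\sigma(n)/8$, and then produce semiprimes $n=pq$ with $p\equiv 3$, $q\equiv 5\pmod 8$ so that $v_2(\sigma(n))=3$. The only difference is that you carry out the final count explicitly via Mertens and the prime number theorem in arithmetic progressions, whereas the paper simply cites the standard Landau-type asymptotic for such $2$-almost primes; your version of that step is sound.
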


\begin{proof}
Notice that we can assume in the proof that $n\ge 7$, $n\equiv 7$ (mod 8), otherwise $R(n)=0$. Therefore, if we can express $n$ as $n=X^2+Y^2+Z^2+T^2$, for \emph{arbitrary} integers $X,Y,Z$ and $T$, then exactly three of  $X,Y,Z$ and $T$ must be odd and one must be congruent to 2 modulo 4; say $T=2W$, with $W$ odd. In particular, none of them is zero, and the positive and the  negative solutions are symmetric for each variable. It easily follows that, if $M(n)$ is the number of arbitrary integer representations of $n=X^2+Y^2+Z^2+T^2$, then
$$R(n)=M(n)/(2^4 \cdot 4)=M(n)/64.$$

By the four-square theorem, we know that $M(n)=8\sum d$, where the sum is taken over the divisors $d$ of $n$ that are not divisible by 4. Since $n\equiv 7$ (mod 8) is odd, this immediately gives us that
\begin{equation}\label{R}
R(n)=8\sigma(n)/64=\sigma(n)/8,
\end{equation}
where as usual  $\sigma$ denotes the sum-of-divisor function. It is folklore that $\sigma$ is multiplicative and that, if we write the prime factorization of $n$ as $n=\prod_{i=1}^r p_i^{a_i}$, then
\begin{equation}\label{sigma}
\sigma(n)=\prod_{i=1}^r (1+p_i+p_i^2+\dots +p_i^{a_i}).
\end{equation}

Notice that, by (\ref{R}), $R(n)$ is odd if and only if $\sigma(n)$ is not divisible by 16. In other words,  showing the lemma is tantamount to proving that, for at least the order of $x\log \log x /\log x$ positive integers $n\le x$, $n\equiv 7$ (mod 8), we have the 2-adic valuation $v_2(\sigma(n))=3$. But if $n$ is the product of exactly two prime factors, say $n=pq$, where $p\equiv 3$ (mod 8) and $q\equiv 5$ (mod 8), then by (\ref{sigma}),
$$v_2(\sigma(n))=v_2((1+p)(1+q))=2+1=3.$$

The result now follows by observing that, by an elementary generalization of the prime number theorem for arithmetic progressions to 2-almost primes (see e.g. \cite{Lu}), the number of the above integers $n=pq$ is asymptotic to a constant multiple of $x\log \log x /\log x$.
\end{proof}

Our main result of this section is the following asymptotic lower bound for $f_c^1(x)$ and $f_j^1(x)$. As usual, we say that two series $\sum_{n}a(n)q^n \equiv \sum_{n}b(n)q^n$ (mod $m$) if $a(n)\equiv b(n)$ (mod $m$), for all $n$.

\begin{theorem}\label{main}
We have:
$$f_j^1(x) {\ }{\ }\text{and}{\ }{\ } f_c^1(x)  \gg \frac{\sqrt{x} \log \log x}{\log x}.$$
\end{theorem}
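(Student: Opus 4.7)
The plan is to translate Lemma~\ref{78} into a bound on $f_j^1$ via an explicit convolution identity modulo $2$ between $\sum R(n)q^n$ and $\sum j(n)q^n$; the corresponding bound for $f_c^1$ will then follow from a direct reduction of $c$ to $j$.

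First I would record the generating functions modulo $2$. Jacobi's identity $\prod_{i\ge 1}(1-q^i)^3 = \sum_{m\ge 0}(-1)^m(2m+1)q^{m(m+1)/2}$ reduces mod~$2$ to $\prod_{i\ge 1}(1-q^i)^3 \equiv B(q)$, where $B(q) := \sum_{m\ge 0}q^{m(m+1)/2}$. Combined with $E_4 \equiv 1 \pmod 2$ (since $240$ is even) and $(1-q^i)^{24} \equiv (1-q^{8i})^3 \pmod 2$, this yields
$$\sum_{n\ge 0}c(n)q^n \equiv \frac{1}{B(q)}\pmod 2 \quad\text{and}\quad \sum_{n\ge -1}j(n)q^n \equiv \frac{1}{A(q)}\pmod 2,$$
where $A(q) := \sum_{k\ge 1,\, k\text{ odd}}q^{k^2}$. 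Since $k^2 - 1$ is $8$ times a triangular number for every odd $k$, we have the exact identity $A(q) = qB(q^8)$, from which $j(8m-1)\equiv c(m)\pmod 2$ and $j(n)\equiv 0\pmod 2$ for $n\not\equiv 7\pmod 8$. In particular $f_c^1(y) = f_j^1(8y-1)$, so it suffices to prove the bound for $f_j^1$.

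Next I would link $R$ to $j$. By construction $\sum_n R(n)q^n = A(q)^3 A(q^4)$, and iterating the Frobenius identity $P(q)^2 \equiv P(q^2)\pmod 2$ gives $A(q^4)\equiv A(q)^4$ and $A(q)^8\equiv A(q^8)$, so
$$\sum_n R(n)q^n \equiv A(q)^7 \equiv \frac{A(q^8)}{A(q)} \equiv A(q^8)\cdot\sum_{m\ge -1}j(m)q^m \pmod 2.$$
Reading off coefficients, for every $n$,
$$R(n) \equiv \sum_{\substack{k\ge 1,\, k\text{ odd}\\ 8k^2\le n+1}} j(n-8k^2) \pmod 2.$$
Thus whenever $R(n)$ is odd, at least one pair $(k,m)$ with $k$ odd positive, $m\ge -1$, $j(m)$ odd, and $n = 8k^2 + m$ must exist.

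Finally, selecting one such pair for each $n\le x$ with $R(n)$ odd defines an injection into the set of all such pairs (since $(k,m)$ determines $n$), whence
$$f_R^1(x) \le \bigl|\{k\text{ odd positive}: k\le \sqrt{(x+1)/8}\}\bigr| \cdot f_j^1(x) = O(\sqrt x)\cdot f_j^1(x).$$
Combining with Lemma~\ref{78} gives $f_j^1(x) \gg f_R^1(x)/\sqrt x \gg \sqrt x \log\log x/\log x$, and the bound for $f_c^1$ follows. The main obstacle is the second step: setting up the convolution identity $\sum R(n)q^n \equiv A(q^8)/A(q) \pmod 2$, which is the crucial bridge from the four-square arithmetic of $R$ (controlled via $\sigma$ by the lemma) to the Klein $j$-function. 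Once that identity is in hand, the remainder of the argument is routine Frobenius manipulation in $\mathbb{Z}_2[[q]]$ and an elementary counting bound.
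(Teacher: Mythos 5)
Your proposal is correct and follows essentially the same route as the paper: it relies on Lemma~\ref{78}, the identity $\sum_n R(n)q^n=\Delta^3(q)\Delta(q^4)\equiv \Delta(q)^7\pmod 2$, and the same convolution-counting step that divides the $\gg x\log\log x/\log x$ odd coefficients by the $O(\sqrt x)$ odd coefficients of the theta factor. The only differences are cosmetic: you work directly with $j$ (via $A(q)^7\equiv A(q^8)\sum j(m)q^m$) instead of first passing to $c$ and the $8$-dilated product $q\prod_{i\ge 1}(1-q^i)^{21}$, and you spell out the injection argument that the paper leaves implicit.
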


\begin{proof} We first notice that it suffices to show the result for $c(n)$. Indeed, working modulo 2 (here and in all congruences throughout this proof), by (\ref{tau}) the generating function for $j(n)$ is the multiplicative inverse of the Ramanujan Tau function; i.e.,
$$\sum_{n\ge -1}j(n)q^n \equiv \frac{1}{q\prod_{i\ge 1} (1-q^i)^{24}}.$$

Hence,
$$\sum_{n\ge -1}j(n)q^n \equiv \frac{1}{q\prod_{i\ge 1} (1-q^{8i})^3}\equiv \frac{1}{q\prod_{i\ge 1} (1-q^{8i})(1-q^{8\cdot 2i})}= q^{-1}\sum_{n\ge 0}c(n)q^{8n}.$$

Therefore, it is clear that a lower bound for $f_c^1(x)$ is equivalent, up to a multiplicative constant, to the same lower bound for $f_j^1(x)$, as we claimed. 

Thus, let us now prove the theorem for $f_c^1(x)$. Recall that, in Lemma \ref{78}, we have shown the existence of $\gg x\log \log x /\log x$  integers $n\le x$  having an odd number of representations, $R(n)$, of the form $n=X^2+Y^2+Z^2+4W^2$, for odd positive  integers $X,Y,Z$ and $W$.

Translating this in terms of generating functions, one moment's thought gives us that, if we define $\Delta(q)= \sum_{n\ge 0}q^{(2n+1)^2}$ to be the generating function of the odd squares, then
\begin{equation}\label{8m}
\sum_{n\ge 0}R(n)q^{n} = \Delta^3(q) \cdot \Delta(q^4).
\end{equation}

But by the following well-known modulo 2 identity (it  follows, for instance, from the Jacobi triple product identity),
\begin{equation}\label{3}
\prod_{i\ge 1} (1-q^i)^3 \equiv \sum_{n\ge 0}q^{\binom{n+1}{2}},
\end{equation}
we easily have: 
\begin{equation}\label{24}
\Delta(q) = \sum_{n\ge 0}q^{4n(n+1)+1} = q \sum_{n\ge 0}q^{8\binom{n+1}{2}} \equiv q\prod_{i\ge 1} (1-q^{8i})^{3}\equiv q\prod_{i\ge 1} (1-q^{i})^{24}.
\end{equation}
(This fact was already known to S. Ramanujan; see \cite{AB}, page 84. We thank the referee for suggesting this reference.) Therefore, (\ref{8m}) yields:
$$\sum_{n\ge 0}R(n)q^{n} \equiv \left(q\prod_{i\ge 1} (1-q^{i})^{24}\right)^ 3 \cdot q^4\prod_{i\ge 1} (1-q^{4i})^{24}$$$$\equiv q^7\prod_{i\ge 1} (1-q^{i})^{72} \cdot \prod_{i\ge 1} (1-q^{i})^{96}=q^7\prod_{i\ge 1} (1-q^{i})^{168}\equiv q^7\prod_{i\ge 1} (1-q^{8i})^{21}.$$

Thus, if we set $\prod_{i\ge 1} (1-q^{i})^{21}=\sum_{n\ge 0}b(n)q^{n}$, we have  $b(n)\equiv R(8n+7)$ for all $n$. Hence,  it promptly follows from Lemma \ref{78} that there are $\gg x\log \log x /\log x$  odd coefficients of $q\prod_{i\ge 1} (1-q^{i})^{21}$ in degree $\le x$.

Now notice that, by  (\ref{24}),
$$q\prod_{i\ge 1} (1-q^{i})^{21}=\frac{q\prod_{i\ge 1} (1-q^{i})^{24}}{\prod_{i\ge 1} (1-q^{i})^{3}}
\equiv \frac{\Delta(q)}{\prod_{i\ge 1} (1-q^i)(1-q^{2i})}= \Delta(q) \cdot \sum_{n\ge 0}c(n)q^{n}.$$

Therefore, since there are clearly the order of $ \sqrt{x}$ odd coefficients of $\Delta(q)$ in degree $\le x$, we easily deduce that
$$f_c^1(x)\gg \frac{x \log \log x}{\log x}\cdot \frac{1}{\sqrt{x}} = \frac{\sqrt{x} \log \log x}{\log x},$$
and the proof  is complete.
\end{proof}

\section{The modular form refinement}

As we have mentioned in the introduction, our goal in this section is to slightly refine our lower bound of Theorem \ref{main} using the theory of modular forms. We will briefly recap here the necessary terminology and results, referring e.g. to \cite{Ap,DS} for an introduction to modular forms, and to \cite{Ser1} for more on those of level 1 modulo 2, which are the topic of this section.

Let $M$ be the algebra of all modular forms of level 1 with integer coefficients. A nice result of P. Swinnerton-Dyer says that, modulo 2, $M=\mathbb Z_2[\Delta]$. In other words, the reduction modulo 2 of any modular form of level 1 can be expressed as a polynomial in $\Delta(q)= \sum_{n\ge 0}q^{(2n+1)^2}$. A classical result of J.-P. Serre  \cite{Ser2} then gives, asymptotically, the order of magnitude of the number of odd Fourier coefficients of any modular form $m$ of level 1 having \emph{order of nilpotency} $g(m)\ge 2$; in particular, all positive powers $\Delta^k$ of $\Delta$, where $k$ is not a power of 2, can be proven to satisfy $g(\Delta^k)\ge 2$. Serre's result can be phrased as follows:

\begin{theorem}[Serre \cite{Ser2}] Let $m$ be a  modular form of level 1 with integer coefficients such that $g(m)\ge 2$. With a slight abuse of notation, denote by $f^1_m(x)$ the number of odd Fourier coefficients of  $m$ up to degree $x$. Then the order of magnitude of $f^1_m(x)$ is asymptotic to a constant multiple of
$$\frac{x (\log \log x)^{g(m)-2}}{\log x}.$$
\end{theorem}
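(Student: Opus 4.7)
My plan is to combine Swinnerton-Dyer's structural result $M \otimes \mathbb{F}_2 = \mathbb{F}_2[\Delta]$ with Serre's theory of the Hecke algebra action on modular forms modulo $2$, and then to conclude via a Landau-Selberg-Delange count of integers with a prescribed number of prime factors in arithmetic progressions. Writing $m \equiv P(\Delta) \pmod{2}$ and exploiting the Frobenius identity $\Delta(q)^{2^j} \equiv \Delta(q^{2^j}) \pmod{2}$, I reduce the analysis to odd Fourier coefficients of $\Delta^k$ for odd exponents $k$, and more generally to the study of Hecke-invariant subspaces of $\mathbb{F}_2[\Delta]$.

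The central step is to translate the order of nilpotency $g(m)$ into arithmetic information about the set $S_m = \{n : a_n \text{ is odd}\}$, where $m = \sum a_n q^n$. The odd-prime Hecke operators $T_\ell$ act on $\mathbb{F}_2[\Delta]$ via the relation $(T_\ell m)_n \equiv a_{n\ell} + a_{n/\ell} \pmod{2}$, and by definition $g(m)$ is the least $g$ such that $T_{\ell_1}\cdots T_{\ell_g} m \equiv 0$ for every tuple of odd primes. Iterating this relation and inducting on the number of prime factors of $n$, I would show that $n \in S_m$ only when $n = n_0 \ell_1 \cdots \ell_r$ with $r \leq g(m) - 1$, where $n_0$ ranges over a finite set of bounded ``base'' indices and the primes $\ell_i$ lie in specific Chebotarev-type classes of positive density determined by $m$.

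Given this sparse-support description, I introduce the Dirichlet series $D_m(s) = \sum_{n \in S_m} n^{-s}$. The structural description above writes $D_m(s)$ as a finite sum of Dirichlet series whose terms each behave, up to an analytic factor near $s = 1$, as a $(g(m) - 1)$-fold convolution of prime-counting Dirichlet series restricted to the admissible classes. Hence $D_m(s)$ has a singular expansion at $s = 1$ of Selberg-Delange type with weight $g(m) - 1$, and the Landau-Selberg-Delange Tauberian theorem delivers the stated asymptotic
$$f^1_m(x) \sim C_m \cdot \frac{x (\log \log x)^{g(m) - 2}}{\log x},$$
with $C_m > 0$ recoverable from the leading coefficient of the singular expansion.

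The principal obstacle is the Hecke-theoretic structural step. The upper bound on $|S_m \cap [1,x]|$ follows cleanly from the vanishing of $g(m)$-fold Hecke products, but the matching lower bound requires that sufficiently many admissible prime configurations yield contributions that do not cancel in $\mathbb{F}_2$. This non-cancellation is best established via a Galois-theoretic argument: using the image of the mod-$2$ Galois representation attached to $m$, one invokes Chebotarev's theorem to produce the required primes in the necessary conjugacy classes. Once this non-cancellation is secured, the analytic deduction of the asymptotic from the structure of $D_m(s)$ is essentially routine.
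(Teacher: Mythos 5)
A point of context first: the paper does not prove this statement at all --- it is quoted as a classical theorem of Serre \cite{Ser2} (with the sharper asymptotic equivalent attributed, in the remark right after it, to Bella\"iche--Nicolas \cite{BN}), so there is no internal proof to compare yours with; your proposal has to stand on its own as a proof of Serre's theorem.

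Taken on its own, your outline points in the right general direction --- reduce via Swinnerton-Dyer and Frobenius to the structure of $\mathbb{F}_2[\Delta]$, describe the support $S_m$ of the odd coefficients in terms of the nilpotency order, then count by a Landau/Selberg--Delange argument --- but the two steps that carry all the weight are left unproved, and one is proposed with a mechanism that cannot work as stated. The upper bound does not ``follow cleanly'': it needs a genuine induction on the number of prime factors of $n$ using the recursion $a(n\ell)\equiv (T_\ell m)(n)+a(n/\ell) \pmod 2$, with care at prime powers and at the prime $2$. More seriously, for the lower bound you invoke Chebotarev through ``the image of the mod-$2$ Galois representation attached to $m$''. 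In level $1$ and characteristic $2$ that image carries no usable information: the semisimplification of any such representation is trivial ($1\oplus 1$, unramified outside $2$), so it cannot single out the admissible prime classes. The non-cancellation input in Serre's and Nicolas--Serre's arguments comes instead from the explicit structure of the mod-$2$ Hecke algebra acting on $\mathbb{F}_2[\Delta]$ (a power series ring in $T_3$ and $T_5$) together with the dependence of $T_\ell$ on $\ell$ modulo $8$, i.e.\ Dirichlet conditions attached to the quadratic fields ramified only at $2$ --- not to a Galois representation attached to $m$ itself. Finally, what you claim to deliver, a true asymptotic $f^1_m(x)\sim C_m\, x(\log\log x)^{g(m)-2}/\log x$ with $C_m>0$, is stronger than the order-of-magnitude statement being reviewed and is precisely the Bella\"iche--Nicolas refinement; your sketch gives no control of such a constant, since possible cancellation among the finitely many ``base'' configurations is exactly what would have to be ruled out quantitatively. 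So: right overall shape, but the Hecke-structural step and the non-cancellation step are genuine gaps, not routine verifications.
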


Notice  that, indeed, Serre's theorem cannot apply to $m=\Delta^k$ for $k=2^t$, because obviously, working modulo 2, $f^1_{\Delta^{2^t}}(x)$ has the order of magnitude of $f^1_{\Delta}(x)$, namely $\sqrt{x}$. We also remark that an asymptotic equivalent of $f^1_m(x)$ has been given in \cite{BN}, as pointed out to us by the referee.

For any integer $k\ge 1$, write the binary expansion of $k$ as $k=\sum_{i\ge 0}\beta_i 2^i$ (hence $\beta_i\in \{0,1\}$ for all $i$).  Define now the \emph{height} of $k$ as $h(k)=n_3(k)+n_5(k)$, where
$$n_3(k)=\sum_{i\ge 0}\beta_{2i+1} 2^i {\ }{\ }\text{and}{\ }{\ } n_5(k)=\sum_{i\ge 0}\beta_{2i+2} 2^i.$$

A recent crucial result of  J.-L. Nicolas and J.-P. Serre  \cite{NS} says (among other facts) that, for any odd integer $k\ge 3$, 
\begin{equation}\label{oddk}
g(\Delta^k)=h(k)+1.
\end{equation}

Therefore, by Serre's theorem and Identity (\ref{oddk}), we can easily deduce that, for any $k\ge 3$ odd, the number of odd coefficients  of $\Delta^k$ up to degree $x$ is, asymptotically, of the order of magnitude of 
\begin{equation}\label{kk}
\frac{x (\log \log x)^{n_3(k)+n_5(k)-1}}{\log x}.
\end{equation}

\begin{example}
Let $k=7$. We have $7=1\cdot 2^0+1\cdot 2^1+1\cdot 2^2$, and therefore $n_3(7)=n_5(7)=1$ and $n_3(7)+n_5(7)-1=1$. Thus, by (\ref{kk}), the order of magnitude of $f^1_{\Delta^7}(x)$, or equivalently,  of the number of odd coefficients of $\prod_{i\ge 1} (1-q^{i})^{21}$ up to degree $x$, is  $x\log \log x /\log x$. This also proves that our lower bound of Lemma \ref{78} is optimal. 
\end{example}

The next theorem is the main result of this section, where we employ the modular form machinery developed in  \cite{NS,Ser2} to refine the elementary lower bound given in the previous section for the number of odd coefficients of the Klein $j$-function and the cubic partition function, by a factor of $(\log \log x)^K$, for any $K> 0$. In fact, our result will  more generally hold for any positive power $b$ of the generating function of the partition function, that we define as
$$\sum_{n\ge 0}P_b(n)q^n = \frac{1}{\prod_{i\ge 1} (1-q^i)^b}.$$
(Notice that $P_1(n)=p(n)$, and modulo 2, $P_3(n)=c(n)$.) 

We only remark here that, though it will not be part of this note, by a much longer and more technical analysis involving a similar modular form approach, and entirely along the lines of Theorem 6 of a recent paper of J. Bella\"iche and J.-L. Nicolas \cite{BN}, one could further slightly improve the lower bound of Theorem \ref{bbb} below. In fact, also the  lower bound of $\sqrt{x}$ for  the even coefficients obtained by Berndt-Yee-Zaharescu \cite{BYZ} could be slightly refined (again for all powers of the generating function of the partition function) exactly with the same type of argument given in \cite{BN}, Theorem 5. However, unfortunately, we have not yet been able to reach, by any approach (either involving or avoiding modular forms), a lower bound of at least the order of $\sqrt{x}$ for the number of odd coefficients of the generating function of the partition function or of any of its powers.

\begin{theorem}\label{bbb} For any given integer $b\ge 1$, the following lower bound holds, for any $K> 0$:
$$f_{P_b}^1(x) \gg \frac{\sqrt{x} (\log \log x)^K}{\log x}.$$
\end{theorem}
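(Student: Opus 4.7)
The plan is to deduce the bound via the convolution argument used in Theorem~\ref{main}, now applied to a power $\Delta^{k''}$ whose height $h(k'')$ is large enough that Serre's theorem produces $\gg x(\log\log x)^K/\log x$ odd coefficients on the right-hand side. First, I would reduce to the case where $b$ is odd: writing $b = 2^v \beta$ with $\beta$ odd, Frobenius mod~$2$ gives $\sum_n P_b(n) q^n \equiv \sum_n P_\beta(n) q^{2^v n} \pmod 2$, so a bound for $P_\beta$ at $x/2^v$ transfers to $P_b$ at $x$.

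For $b$ odd, the heart of the argument is an identity
\[
\left(\sum_n P_b(n) q^n\right) H(q) \equiv q^c \prod_{i \ge 1}(1-q^i)^{3k''} \pmod 2,
\]
in which $k''$ is odd, $\ge 3$, with $h(k'') \ge K+1$, and in which $H$ has $O_K(\sqrt{x})$ odd coefficients up to degree $x$. The construction of $H$ splits on $b \pmod 3$. If $3 \mid b$ (write $b = 3b''$), take $H = \Delta^{2^T}$: by (\ref{24}),
\[
\left(\sum_n P_b(n) q^n\right)\Delta^{2^T} \equiv q^{2^T}\prod_{i \ge 1}(1-q^i)^{3(2^{T+3}-b'')} \pmod 2,
\]
so $k'' = 2^{T+3}-b''$ is odd and, for $T$ large, has a long block of high-order $1$-bits in its binary expansion, forcing $h(k'') \to \infty$. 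If $3 \nmid b$, take instead $H = \prod_{i \ge 1}(1-q^i)^{2^u}$: the algebraic identity
\[
\left(\sum_n P_b(n) q^n\right)\prod_{i \ge 1}(1-q^i)^{2^u} = \prod_{i \ge 1}(1-q^i)^{2^u - b}
\]
is then exact, and choosing $u$ even when $b \equiv 1 \pmod 3$ and $u$ odd when $b \equiv 2 \pmod 3$ ensures $3 \mid 2^u - b$. A brief check shows that $k'' = (2^u - b)/3$ is an odd integer, and the near-alternating $\ldots 010101$ binary pattern of $2^u/3$ forces $h(k'') \to \infty$ as $u$ grows.

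The right-hand side of the identity is then analyzed exactly as in the proof of Theorem~\ref{main}. Combining (\ref{24}) with Frobenius gives $\prod_{i \ge 1}(1-q^{8i})^{3k''} \equiv q^{-k''}\Delta^{k''} \pmod 2$, so the number of odd coefficients of $\prod_{i \ge 1}(1-q^i)^{3k''}$ in degree $\le y$ equals (up to an $O(1)$ shift) that of $\Delta^{k''}$ in degrees $\le 8y$ lying in the residue class $k'' \pmod 8$; since $k''$ is odd and $h(k'') \ge K+1$, formula (\ref{oddk}) gives $g(\Delta^{k''}) \ge K+2$ and Serre's theorem then supplies the required asymptotic $\gg y(\log\log y)^K/\log y$. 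The multiplier $H$ is on the other hand very sparse: $\Delta^{2^T} \equiv \Delta(q^{2^T}) \pmod 2$ is supported on $2^T(2n+1)^2$, and $\prod_{i \ge 1}(1-q^i)^{2^u} \equiv \prod_{i \ge 1}(1-q^{2^u i}) \pmod 2$ is a pentagonal series in $q^{2^u}$, giving $\asymp \sqrt{x/2^T}$ or $\asymp \sqrt{x/2^u}$ odd coefficients respectively (with implicit constant depending on $K$ through $T$ or $u$). The convolution inequality used in Theorem~\ref{main}---namely that the number of odd coefficients of $GH$ in degree $\le x$ is at most the product of the individual counts---then yields $f^1_{P_b}(x) \gg \sqrt{x}(\log\log x)^K/\log x$. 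The main technical point I expect will require care is Case~2, where one must verify for each odd $b$ with $3 \nmid b$ that infinitely many admissible $u$ give $h((2^u - b)/3) \to \infty$; this reduces to a controlled combinatorial estimate on the binary digits of $2^u/3$ under a fixed low-order perturbation.
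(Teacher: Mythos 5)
Your proposal is correct and is essentially the paper's own argument: reduce to odd $b$ via Frobenius, split on $b \bmod 3$, choose $k$ with $3k=2^t-b$ (resp.\ $3k=3\cdot 2^t-b$) and $h(k)$ large, apply the Nicolas--Serre formula (\ref{oddk}) together with Serre's theorem to $\Delta^{k}$ (equivalently, to $\prod_{i\ge1}(1-q^i)^{3k}$ after the dilation by $8$ coming from (\ref{24})), and divide out the sparse power-of-two factor using the convolution count --- your identities are the paper's, merely rearranged and undilated. The only point you flag as delicate is in fact immediate and needs no analysis of the binary digits of $(2^u-b)/3$: since $n_3$ and $n_5$ are weighted sums of the bits rather than bit counts, $h(k)=n_3(k)+n_5(k)\to\infty$ as $k\to\infty$ over \emph{all} integers $k$, which is exactly how the paper disposes of this step.
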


\begin{proof} First notice that it is a simple exercise on binary expansions to show that, if we set $\alpha(k)=n_3(k)+n_5(k)-1$, then $\lim_{k \rightarrow \infty}\alpha(k) =\infty$. Also, we claim that we may assume in the proof that $b$ is odd. Indeed, working modulo 2 (here and in all of the following congruences), if $b=2^s\cdot b_0$ for some odd integer $b_0$, then
$$\frac{1}{\prod_{i\ge 1} (1-q^i)^b}\equiv \frac{1}{\prod_{i\ge 1} (1-q^{2^s\cdot i})^{b_0}}.$$
Therefore, $f_{P_b}^1(x)$ is  (always up to a constant factor) of the same order of magnitude of $f_{P_{b_0}}^1(x)$, as desired.

We begin with the case  $b\equiv \pm 1$ (mod 3). Fix $K> 0$. Since $\alpha(k)$ goes to infinity for $k$ large, it is easy to see that there exists an integer $k$ such that $\alpha(k)\ge K$ and $3k=2^t-b$, for some $t$. (The integer $t$ will be even if $b\equiv 1$ (mod 3), and odd if $b\equiv -1$ (mod 3).) 

Therefore, by congruence (\ref{24}), we get:
$$\Delta^k(q)= \left( \sum_{n\ge 0}q^{(2n+1)^2}\right)^k \equiv \left( q\prod_{i\ge 1} (1-q^i)^{24}\right)^k=q^k\prod_{i\ge 1} (1-q^i)^{24k}$$$$\equiv q^k \prod_{i\ge 1} (1-q^{8i})^{3k}\equiv \frac{q^k \prod_{i\ge 1} (1-q^{8i})^{2^t}}{\prod_{i\ge 1} (1-q^{8i})^{b}}\equiv \frac{q^k \prod_{i\ge 1} (1-q^{2^{t+3}\cdot i})}{\prod_{i\ge 1} (1-q^{8i})^{b}}.$$

Notice that,  by (\ref{kk}),  $f^1_{\Delta^k}\gg x (\log \log x)^K/\log x$. Thus, since by the Pentagonal Number Theorem  there are  the order of $ \sqrt{x}$ odd coefficients of  $q^k \prod_{i\ge 1} (1-q^{2^{t+3}\cdot i})$ in degree $\le x$, we  easily conclude that 
$$f^1_{P_b}\gg \frac{x (\log \log x)^K}{\log x}\cdot \frac{1}{\sqrt{x}}=\frac{\sqrt{x} (\log \log x)^K}{\log x},$$
as desired.

The case $b\equiv 0$ (mod 3) can be treated in a rather analogous fashion, though now, fixed $K$, we want to choose our integer $k$ such that $\alpha(k)\ge K$ and $3k=3\cdot 2^t-b$. Then, by (\ref{3}) and (\ref{24}), we have (skipping some of the steps similar to the previous case):
$$\Delta^k(q) \equiv q^k \prod_{i\ge 1} (1-q^{8i})^{3k}= \frac{q^k \prod_{i\ge 1} (1-q^{8i})^{3\cdot 2^t}}{\prod_{i\ge 1} (1-q^{8i})^{b}}$$$$\equiv \frac{q^k \prod_{i\ge 1} (1-q^{2^{t+3}\cdot i})^3}{\prod_{i\ge 1} (1-q^{8i})^{b}}\equiv \frac{q^k \sum_{n\ge 0}q^{2^{t+3}\cdot \binom{n+1}{2}}}{\prod_{i\ge 1} (1-q^{8i})^{b}}.$$

Since,  by (\ref{kk}), $f^1_{\Delta^k}\gg x (\log \log x)^K/\log x$, and there are of course the order of $ \sqrt{x}$ odd coefficients of  $q^k \sum_{n\ge 0}q^{2^{t+3}\cdot \binom{n+1}{2}}$ in degree $\le x$, we can again conclude that 
$$f^1_{P_b}\gg \frac{\sqrt{x} (\log \log x)^K}{\log x},$$
and the proof of the theorem is complete. 
\end{proof}

\section{Acknowledgements} We thank the anonymous referee for helpful suggestions, and William Keith for comments and for several insightful conversations of partition theory. This work was done while the author was partially supported by a Simons Foundation grant (\#274577).


\begin{thebibliography}{llll}

\bibitem{Ahl} S. Ahlgren: \emph{Distribution of parity of the partition function in arithmetic progressions}, Indag. Math. (N.S.) \textbf{10} (1999), 173--181.

\bibitem{Alf} C. Alfes: \emph{Parity of the coefficients of Klein's $j$-function}, Proc. Amer. Math. Soc.  \textbf{141} (2013), no. 1, 123--130.

\bibitem{AB} G.E. Andrews and B. Berndt: ``Ramanujan's Lost Notebook, Part III'', Springer, New York (2012).

\bibitem{Ap} T.M. Apostol: ``Modular functions and Dirichlet series in number theory'', Graduate Texts in Mathematics \textbf{41}, Springer-Verlag, New York-Heidelberg (1976).

\bibitem{BN} J. Bella\"iche and J.-L. Nicolas: \emph{Parit\'e des coefficients de formes modulaires}, Ramanujan J., to appear. Available \href{http://people.brandeis.edu/~jbellaic/preprint/ParitecoefV5.pdf}{here}.

\bibitem{BYZ} B. Berndt, A.J. Yee and A. Zaharescu:  \emph{On the parity of  partition functions}, Internat. J. Math. \textbf{14} (2003), no. 4, 437--459.

\bibitem{CL} W.Y.C. Chen and B.L.S. Lin: \emph{Congruences for the Number of Cubic Partitions
Derived from Modular Forms}, preprint. Available on the  \href{http://arxiv.org/pdf/0910.1263.pdf}{arXiv}.

\bibitem{CEO} J.N. Cooper, D. Eichhorn and K. O'Bryant: \emph{Reciprocals of binary series}, Int. J. Number Theory \textbf{2} (2006), no. 4, 499--522. 

\bibitem{DS} F. Diamond and J. Shurman: ``A First Course in Modular Forms'', Graduate Texts in Mathematics \textbf{228}, Springer-Verlag, New York (2005).

\bibitem{Ei} D. Eichhorn: \emph {A new lower bound on the number of odd values of the ordinary partition function}, Ann. Comb. \textbf{13} (2009), 297--303.

\bibitem{Lu} \emph{MathOverflow} answer by ``Lucia'', February 10, 2014. Available \href{http://mathoverflow.net/questions/156982/chebotarev-density-theorem-for-k-almost-primes}{here}.

\bibitem{Mo} \emph{MathOverflow} answer by ``paul Monsky'', October 10, 2012. Available \href{http://mathoverflow.net/questions/100701/primes-and-x22y24z2/109275#109275}{here}.

\bibitem{Mo2} P. Monsky: \emph{Disquisitiones Arithmetic\ae {\ }and online sequence A108345}, preprint. Available on the  \href{http://arxiv.org/pdf/1009.3985.pdf}{arXiv}.

\bibitem{Nic} J.-L. Nicolas: \emph{Parit\'e des valeurs prises par la fonction de partition $p(n)$ et anatomie des entiers}, CRM Proceedings and Lecture Notes, Amer. Math. Soc. \textbf{46} (2008), 97--113.

\bibitem{NS} J.-L. Nicolas and J.-P. Serre: \emph{Formes modulaires modulo 2: l'ordre de nilpotence des op\'erateurs de Hecke}, C.R. Acad. Sci. Paris, Ser. I \textbf{350} (2012), 343--348.

\bibitem{Ono} K. Ono: \emph{Parity of the partition function}, Adv. Math. \textbf{225} (2010), no. 1, 349--366.

\bibitem{OR} K. Ono and N. Ramsey: \emph{A mod $\ell$ Atkin-Lehner theorem and applications}, Arch. Math. (Basel) \textbf{98} (2012), no. 1, 25--36.

\bibitem{PS} T.R. Parkins and D. Shanks: \emph{On the distribution of parity in the partition function}, Math. Comp. \textbf{21} (1967), 466--480.

\bibitem{Ser1} J.-P. Serre: \emph{Valeurs propres des op\'erateurs de Hecke modulo $\ell $}, Ast\'erisque \textbf{24-25} (1975), 109--117.

\bibitem{Ser2} J.-P. Serre: \emph{Divisibilit\'e de certaines fonctions arithm\'etiques}, L'Enseignement Math. \textbf{22} (1976), 227--260.

\bibitem{Ser} J.-P. Serre: Appendix to: J.-L. Nicolas, I.Z. Ruzsa and A. S\'ark\"ozy: \emph{On the parity of additive representation functions}, J. Number Theory \textbf{73} (1998), no. 2, 292--317.

\end{thebibliography}
\end{document}